\documentclass[]{amsart}

\usepackage[utf8]{inputenc}
\usepackage[OT2,T1]{fontenc}
\DeclareSymbolFont{cyrletters}{OT2}{wncyr}{m}{n}
\DeclareMathSymbol{\Sha}{\mathalpha}{cyrletters}{"58}
\usepackage{graphicx}
\graphicspath{ {./images/} }

\usepackage[hyphens,spaces,obeyspaces]{url}
\usepackage[colorlinks,allcolors=blue,hyperindex,breaklinks]{hyperref}
\hypersetup{
           breaklinks=true,   
           colorlinks=true,   
        }

\usepackage{orcidlink}

\usepackage{pgfplots}
\pgfplotsset{compat=1.18}

\usepackage{mathrsfs}

\title[]{Convergent series of coherent states}
\date{\today}
\usepackage[foot]{amsaddr}
\usepackage{comment}

\author[Peter Vang Uttenthal]{Peter Vang Uttenthal \orcidlink{0009-0001-0878-8213}}
\address{Department of Mathematics, Aarhus Universitet, Ny Munkegade 118, 1530-421, DK-8000
Aarhus C, Denmark}
\email{petervang@math.au.dk}

\subjclass[2020]{46E22}

\newcommand{\Q}{\mathbb{Q}}
\newcommand{\Z}{\mathbb{Z}}

\newcommand{\R}{\mathbb{R}}
\newcommand{\cc}{\mathbb{C}}

\usepackage{amsthm,amsmath, mathrsfs, mathtools}
\usepackage{amsfonts}
\usepackage{amssymb}
\usepackage{fancyhdr}
\usepackage{IEEEtrantools}
\usepackage{tikz-cd}
\usepackage[english]{babel}
\usepackage[utf8]{inputenc}
\usepackage{csquotes}

\newtheorem{theorem}{Theorem}
\numberwithin{theorem}{section}
\newtheorem{lemma}{Lemma}
\numberwithin{lemma}{section}

\numberwithin{definition}{section}
\newtheorem{proposition}{Proposition}
\numberwithin{proposition}{section}
\newtheorem{corollary}{Corollary}
\numberwithin{corollary}{section}
\newtheorem{remark}{Remark}
\numberwithin{remark}{section}

\numberwithin{conjecture}{section}

\numberwithin{question}{section}

\numberwithin{example}{section}

\usepackage[hyphens,spaces,obeyspaces]{url}
\usepackage[colorlinks,allcolors=blue,hyperindex,breaklinks]{hyperref}

\hypersetup{
           breaklinks=true,   
           colorlinks=true,   
        }
\usepackage{tabularx}
\usepackage{subcaption}
\begin{document}

\maketitle
\begin{abstract}
For a discrete subspace $\Gamma$ of a homogeneous space $X$ and a positive definite reproducing kernel Hilbert space
$\mathscr{H}(X)$, a convergence proof is given for
series of partial Whitney functions over $\Gamma$
that arise in the span of coherent states in 
$\mathscr{H}(X)$.
\end{abstract} 
\tableofcontents

\section{Introduction}
Let $X$ be a homogeneous space and let $f \in \mathscr{C}^\infty(X)$ be a smooth function on $X$. 
Suppose that a discrete subspace
$\Gamma = \{\gamma_k \in X: k\geq 1\}$ 
and a sequence 
$(W_n)_n$ in $\mathscr{C}^\infty(X)$ 
are given. 
If $W_n - f$
vanishes on $\Gamma_n:=\{\gamma_k \in \Gamma: 1 \leq k \leq n\}$ for every $n$,
does
$$
W_n \xlongrightarrow{} f
$$
pointwise at every $z\in X$ as $n$ tends to infinity? A smooth function on $X$ 
that agrees with $f$ on $\Gamma_n$ 
is called a Whitney function for $f|_{\Gamma_n}$, 
denoted $W(f|_{\Gamma_n})$.
In this work, we give affirmative answers 
to the question for Whitney functions
that arise in the linear span of coherent states 
in a positive definite reproducing kernel Hilbert space of functions on $X$.

For an analytic function $f$ with given 
values at equidistant points 
$\Gamma_n = \{\gamma_k \in \pi \Z: |k| \leq n\}$ for all $n$,
Whittaker defined the cardinal series  
\begin{equation*} 
C(f) = \lim_{n \to \infty}
\sum_{\gamma_k \in \Gamma_n} a_k e_{\gamma_k}, 
\quad e_{\gamma_k}(z) = \operatorname{sinc}(z-\gamma_k)
\end{equation*}
and initiated the study of its asymptotic properties \cite{Whittaker}; 
the Whittaker-Shannon theorem states that 
$C(f) (z)= f(z)$ for every $z$ if $f$ is a Payley-Wiener function \cite{interpolation}.
For a fixed index $k$, notice in the cardinal series that $a_k$ remains constant for all $n >|k|$.
In contrast, for a sequence of
Whitney functions 
\[
W(f|_{\Gamma_n})
= \sum_{ \gamma_k \in \Gamma_n} a_{k,n} e_{\gamma_k}
\] 
and a fixed index $k$, the coefficients in 
$\{a_{k,n} : n\geq k\}$
depend on $n$.
As a result, the asymptotic behavior of the sequence  
$(W(f|_{\Gamma_n}))_n$ is difficult 
to control.
The unstable coefficients underlie  
the need 
for 
at least one equidistant $\Gamma$--coordinate 
in the convergence proof in Section \ref{sec:spheres} with classical Fourier analytic techniques.

In contrast,
the only assumption 
in Section \ref{sec:coherent} is that $\Gamma$ is sufficiently dispersed in the ambient space $X$. Note that if $X=G/H$ for reductive Lie groups $G$ and $H$ then $X$ is second-countable,
so every discrete subspace of $X$ is countable.

\begin{theorem}[cf. Theorem \ref{conv}] \label{thmintro} Let $X$ be a homogeneous space with respect to reductive Lie group $G$,
and let $\mathscr{H}(X)$ be a Hilbert space of functions on $X$
admitting a positive definite reproducing kernel 
\[
K: X\times X \to \mathbb{C}.
\]
Let $\Gamma$ be discrete topological subspace of $X$ and define $\Gamma_n = \{\gamma_k \in \Gamma: 1\leq k\leq n\}$ with respect to a fixed enumeration of $\Gamma$. 
If the linear span of the coherent states $(e_\gamma)_{\gamma \in \Gamma}$
is dense in  $\mathscr{H}(X)$, then for every smooth $f\in \mathscr{H}(X)$, the convergence 
\[
W(f|_{\Gamma_n}) \xlongrightarrow{} f
\]
holds in the norm topology and pointwise at every $z\in X$ as $n$ tends to infinity.
\end{theorem}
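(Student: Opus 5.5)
The plan is to identify $W(f|_{\Gamma_n})$ with the orthogonal projection of $f$ onto $V_n:=\operatorname{span}\{e_{\gamma}:\gamma\in\Gamma_n\}$, and then use density of the span of the coherent states together with the reproducing property.

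\textbf{Step 1: existence, uniqueness and the projection property.} Because $K$ is positive definite, the Gram matrix $\bigl(K(\gamma_j,\gamma_k)\bigr)_{1\le j,k\le n}$ is invertible for distinct points $\gamma_1,\dots,\gamma_n$. Hence there are unique coefficients $a_{k,n}$ such that $W_n:=\sum_{k\le n}a_{k,n}e_{\gamma_k}$ satisfies $W_n(\gamma_j)=f(\gamma_j)$ for all $j\le n$. The reproducing property $g(\gamma)=\langle g,e_\gamma\rangle$ rewrites these interpolation conditions as
\[
\langle f-W_n,e_{\gamma_j}\rangle=0\quad (1\le j\le n),
\]
so $f-W_n\perp V_n$. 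Since $W_n\in V_n$, this forces $W_n=P_nf$, where $P_n$ is the orthogonal projection onto $V_n$. Smoothness of $W_n$ follows from smoothness of the kernel sections $e_\gamma$, which I take as part of the setting. Smoothness of $f$ itself is only needed to make sense of the term ``Whitney function''.

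\textbf{Step 2: norm convergence.} The subspaces $V_n$ are nested and increasing, and by hypothesis $\bigcup_n V_n$ is dense in $\mathscr{H}(X)$. Fix $\varepsilon>0$. Choose $g$ in the span with $\|f-g\|<\varepsilon$. Then $g\in V_N$ for some $N$, and for every $n\ge N$ best approximation gives
\[
\|f-P_nf\|\le\|f-g\|<\varepsilon .
\]
Hence $P_nf\to f$ in norm.

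\textbf{Step 3: pointwise convergence.} For each $z\in X$, the Cauchy--Schwarz inequality gives
\[
|f(z)-W_n(z)|=|\langle f-W_n,e_z\rangle|\le\|f-W_n\|\,K(z,z)^{1/2}\to 0 .
\]
The convergence is in fact locally uniform on any set where $K(z,z)$ is bounded.

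\textbf{Main obstacle.} The key point is Step 1: recognising that the interpolation conditions are exactly the normal equations of the orthogonal projection. This is what controls the unstable coefficients $a_{k,n}$ without ever estimating them individually. Distinctness of the points of $\Gamma$ and positive definiteness of $K$ are what guarantee that $W(f|_{\Gamma_n})$ is well defined. Discreteness of $\Gamma$ and the homogeneous structure of $X$ only serve to ensure that $\Gamma$ is countable, so that the enumeration exists.
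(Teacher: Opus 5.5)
Your proposal is correct and follows the paper's proof step for step. You use the reproducing property to read the interpolation conditions as $f-W_n\perp V_n$, so that $W(f|_{\Gamma_n})=P_nf$; norm convergence then follows from density of the span, and pointwise convergence from $|g(z)|=|\langle g,e_z\rangle|\le\|g\|\,K(z,z)^{1/2}$ as in Lemma \ref{pointwise}, with your nested-subspace best-approximation argument filling in the norm-convergence step that the paper only asserts.
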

The new input is to formulate the convergence problem in a Hilbert space with a positive definite reproducing kernel, so that
$W(f|_{\Gamma_n})$ 
becomes  
the orthogonal projection of $f$ 
onto the span of the coherent states
associated with $\Gamma_n$.
The difficulties from Section \eqref{sec:spheres} do not enter, allowing us to prove
convergence for general $\Gamma$ in Theorem \ref{thmintro}.\\

\noindent I am thankful to Birgit Speh and Bent {\O}rsted for sharing their ideas on the topic of this paper. The present work is
supported by Villum Fonden (VIL54509).

\section{Convergence on spheres} \label{sec:spheres}
Let $\Gamma_n$ be the subgroup of the circle $S^1$
obtained as the image of the isomorphism mapping $[1] \in \Z/n\Z$ to a primitive root of unity $e^{2\pi\sqrt{-1}/n}$. 
The dual $\Gamma_n^* = \operatorname{Hom}(\Gamma_n,\cc^\times)$ 
is the group of characters $\chi_k$ defined by 
$
\chi_k(1) = e^{2\pi \sqrt{-1} k/n}.
$
On the space $\mathscr{C}(\Gamma_n)$ of functions on $\Gamma_n$, define an inner product of $g,h\in \mathscr{C}(\Gamma_n)$ by  
$$
\langle g, h \rangle = 
\langle g, h \rangle_{\mathscr{C}(\Gamma_n)} = \frac{1}{|\Gamma_n|} \sum_{k\in \Gamma_n} f(k)\overline{g}(k).
$$
The Fourier expansion of $g$ with respect to $\Gamma_n^\ast$ is 
$$
g = \sum_{\chi_k\in \Gamma_n^*} \langle g, \chi_k\rangle \chi_k
$$
with coefficients $\langle g,\chi_k \rangle$ given in terms of the  
Fourier transform $\widehat{g}$ as 
$$
\widehat{g}(\chi_k) = \frac{1}{|\Gamma_n|}\sum_\ell g(\ell) \overline{\chi_k}(\ell).
$$

\noindent Let $f$ be a smooth function in $\mathscr{C}^\infty(S^1)$. Supposing that $f$ is unknown but only the values on $\Gamma_n$ are given,
the Whitney extension problem for $f|_{\Gamma_n}$
is to find $W\in \mathscr{C}^\infty(S^1)$ such that 
$W(\gamma)=f(\gamma)$ for all $\gamma \in \Gamma_n$ \cite{Whitney}.
By solving a linear system of equations for $(a_\ell)\in \cc^n$, a solution takes the form 
$$
W(f|_{\Gamma_n})(\theta) =  \sum_{1 \leq  \ell \leq n} a_\ell e^{2\pi\sqrt{-1} \ell \theta },\quad \theta \in \R/\Z
$$
For an integer $j\leq n$, noting that 
$
f(j) = W(f|_{\Gamma_n})(j) =
\sum_{\ell = 1 }^n a_\ell \chi_\ell (j),
$ it follows that
$$a_\ell = \langle f|_{\Gamma_n}, \chi_\ell \rangle = \widehat{f|_{\Gamma_n}} (\ell)$$ for all $\ell \leq n$.




The identity $\chi_k(-\ell)=\chi_\ell(-k)$ gives 
$g(-\ell) = n \widehat{ \widehat{g}  } (\ell) $ and hence
for $\theta \in \R/\Z$ that
\begin{align*}
W(\widehat{f|_{\Gamma_n}}) (\theta /n ) &= \sum_{\ell=1}^n \widehat{ \widehat{f|_{\Gamma_n}} } (\ell) e^{2\pi \sqrt{-1} (\theta/n) \ell} \\
&= \frac{1}{n} \sum_{\ell=1}^n  f|_{\Gamma_n}(-\ell) e^{2\pi \sqrt{-1} \theta \ell/n} 
\xlongrightarrow{} \widehat{f} (\theta)
\end{align*}
as $n$ tends to infinity, where
$
\widehat{f}(\theta) = \int_{0}^1 f(x) e^{-2\pi \sqrt{-1} \theta x } dx
$
is
the Fourier transform of $f \in \mathscr{C}^\infty(\R/\Z)$ with respect to the circle group.\\

\noindent Next, consider the parametrization of $S^2$ given by
$$(s_1,s_2,s_3) = (\sin \theta \cos 2\pi \phi, \sin \theta \sin 2\pi \phi, \cos \theta )$$
for $(\phi,\theta) \in [0,1] \times [0,\pi]$. 
For a given dense subset $\{\theta_k: k\geq 1\}$ of $[0,\pi]$,
define the set
$$
\Gamma_n = \{ (\phi_j, \theta_k) \in [0,1] \times [0,\pi] : \phi_j = j/n, 1 \leq  j,k \leq n \}
$$
of cardinality $n^2$.
Let
$f: S^2\to \mathbb{C}$
be a given continuous function
with $y_{jk} := f(\phi_j, \theta_k)$ 
for $(\phi_j,\theta_k)\in \Gamma_n$.
For all integers $\ell \geq 1$, the functions
$$
(s_1+is_2)^{\ell} = (\sin \theta \cos 2\pi \phi + i \sin \theta \sin 2\pi \phi)^{\ell} = (e^{2\pi i \phi} \sin \theta )^{\ell}
$$
on $S^2$ are spherical harmonics of degree $\ell \geq 1$,
given as powers of the function 
$$\varphi(\phi, \theta) = e^{2\pi i \phi} \sin \theta, \quad (\theta,t) \in [0,1]\times [0, \pi].$$
If the linear system
$$
y_{jk} = \sum_{\ell = 1}^{n^2} a_{\ell,n} \varphi( \phi_j, \theta_k )^{\ell}
$$
admits a solution $(a_{\ell,n})_{\ell} \in \mathbb{C}^{n^2}$,
then we note that 
\begin{align*}
W(f|_{\Gamma_n})(\phi, \theta) &
:= \sum_{\ell=1}^{n^2} a_{\ell,n} ( e^{2\pi i \phi} \sin \theta )^{\ell}
\end{align*}
solves the Whitney extension problem for $f|_{\Gamma_n}$. 
For integers $0< \ell \leq n$,
define $\chi_{\ell,n}( j ) = e^{2\pi i  j \ell/n}$
for $j \leq n$
and note that  
$
W(f|_{\Gamma_n} )(\phi_j , \theta) 
    = \sum_{\ell=1}^{n^2} a_{\ell,n} (\sin \theta)^\ell 
    \chi_{\ell,n}(j)$.
Fixing $k, \ell \geq 1$, 
we find for all $n \geq k,\ell$ that 
\begin{align*}
a_{\ell,n} (\sin (\theta_k) )^{\ell}  &= \left\langle W(f|_{\Gamma_n} )( \cdot , \theta_k) , \chi_{\ell,n} \right\rangle_{\mathscr{C}(\Z/n\Z)} \\
&= \frac{1}{n}\sum_{j=1}^n W(f|_{\Gamma_n} )(j/n , \theta_k) e^{-2\pi i j \ell/n} \\
&= \frac{1}{n}\sum_{j=1}^n f( j/n , \theta_k ) e^{-2 \pi i j \ell/n } 
\end{align*}
Let $e_\ell (\phi) = e^{2\pi i \ell \phi}$ for $\phi \in \R/\Z$ and $\ell \in \Z$. 
If $\langle \cdot, \cdot \rangle|_{L^2(S^1)}$
is the inner product on $L^2(S^1)$,
we evaluate the limit 
\begin{align*}
\lim_{n \to \infty} a_{\ell,n} (\sin (\theta_k) )^{\ell}
&= \int_0^1 f( \phi, \theta_k) e^{ -2 \pi i \ell \phi } d\phi = \langle f(\cdot,\theta_k),e_\ell\rangle|_{L^2(S^1)}
\end{align*}
by recognizing the expression as a limit of appropriate  
Riemann sums. 
Expanding $\phi \mapsto f(\phi , \theta)$ in terms of the orthonormal basis $\{e_\ell(\phi) := e^{2\pi i \ell \phi} : \ell \in \Z \}$ of $L^2(S^1)$ gives 
$$
f( \cdot, \theta ) = \sum_{\ell=1}^\infty \langle f(\cdot ,\theta), e_\ell\rangle  e_\ell.
$$
For $
W(f|_{\Gamma_n}) (\cdot,\theta)= \sum_{\ell=1}^n a_{\ell,n} 
(\sin \theta)^\ell e_\ell  
$ it follows for all $n \geq \ell$ that  
$$
\langle W(f|_{\Gamma_n})(,\theta_k) - f(,\theta_k), e_\ell \rangle_{L^2(S^1)}
= 
\langle a_{\ell,n} (\sin \theta_k)^\ell - f(\cdot, \theta_k), e_\ell \rangle_{L^2(S^1)}.
$$
For every fixed $k,\ell$, we conclude by sending $n$ to infinity that 
$$ \lim_{n \to \infty}
\langle W(f|_{\Gamma_n})(\cdot ,\theta_k) 
- f(\cdot ,\theta_k) , e_\ell\rangle_{L^2(S^1)} = 0.
$$

Let $\theta \in [0,\pi]$ be arbitrary. Since $\{\theta_k: k\geq 1\}$ is 
assumed to be dense in 
$[0,\pi]$, choose a subsequence
tending to $\theta$ and, to ease the notation, denote the convergent subsequence by $(\theta_k)_k$. 
For every $n,\ell \geq 1$, the triangle inequality gives
\begin{align*}
|\langle W(f|_{\Gamma_n})(\cdot ,\theta) 
- f(\cdot ,\theta), e_\ell \rangle_{L^2(S^1)} | & \leq 
| \langle W(f|_{\Gamma_n})(\cdot ,\theta)-W(f|_{\Gamma_n})(\cdot ,\theta_k) ,e_\ell \rangle_{L^2(S^1)} | \\
&+ |\langle W(f|_{\Gamma_n})(\cdot ,\theta_k)-f(\cdot , \theta_k) ,e_\ell \rangle_{L^2(S^1)}| \\
&+ |\langle f(\cdot,  \theta_k) - f(\cdot ,\theta), e_\ell \rangle_{L^2(S^1)}|.
\end{align*}
Consequently, and due to  
continuity
of $\theta \mapsto f(\cdot, \theta)$ and $\theta \mapsto W(f|_{\Gamma_n})(\cdot ,\theta)$
as well as the dominated convergence theorem, 
\begin{align*}
|\langle W(f|_{\Gamma_n})(\cdot ,\theta) 
- f(\cdot ,\theta), e_\ell\rangle_{L^2(S^1)} | & \leq 
\limsup_k | \langle W(f|_{\Gamma_n})(\cdot ,\theta)-W(f|_{\Gamma_n})(\cdot ,\theta_k) ,e_\ell\rangle_{L^2(S^1)} | \\
&+ \limsup_k|\langle W(f|_{\Gamma_n})(\cdot ,\theta_k)-f(\cdot ,\theta_k) ,e_\ell\rangle_{L^2(S^1)}| \\
&+ \limsup_k|\langle f(\cdot,  \theta_k) - f(\cdot ,\theta), e_\ell\rangle_{L^2(S^1)}| \\
&= \limsup_{k}|\langle W(f|_{\Gamma_n})(\cdot ,\theta_k)-f(\cdot ,\theta_k) ,e_\ell\rangle_{L^2(S^1)}| \\
&\leq  \limsup_{k} \limsup_{m} 
|\langle W(f|_{\Gamma_m})(\cdot , \theta_k)-f(\cdot ,\theta_k) ,e_\ell\rangle_{L^2(S^1)}| \\
&= \limsup_k \lim_{m \to \infty} 
|\langle W(f|_{\Gamma_m})(\cdot ,\theta_k)-f(\cdot ,\theta_k) ,e_\ell\rangle_{L^2(S^1)}| \\
&= 0.
\end{align*}
For a family $(\varphi_n)_n$ in $L^2(S^1)$,
recall that if 
$\langle \varphi_n,e_\ell\rangle \to 0$ as $n\to \infty$ for all members $e_\ell$ of the basis, 
then 
$\langle \varphi_n, \varphi\rangle \to 0$ as $n\to \infty$
for every $\varphi \in L^2(S^1)$. 
We conclude that 
$$||W(f|_{\Gamma_n})(\cdot ,\theta)   
-f(\cdot ,\theta)  ||^2_{L^2(S^1)} = 
\int_{0}^1 |W(f|_{\Gamma_n})(\phi,\theta)   
-f(\phi ,\theta)|^2 d\phi
\to 0$$ as $n \to \infty$,
from which we derive two consequences.
First, the dominated convergence theorem implies that 
$$\int_0^{\pi} 
\int_{0}^1 |W(f|_{\Gamma_n})(\phi ,\theta)   
-f(\phi ,\theta)|^2  d\phi \sin \theta d \theta
\to 0$$ as $n \to \infty$,
so with respect to the $\operatorname{SO}(3)$-invariant measure $\sin\theta d\theta d\phi$ on $S^2$,
the family $W(f|_{\Gamma_n})$ converges to $f$ in the Hilbert space $L^2(S^2, \sin \theta d\theta d\phi)$.
Second, the limit 
\[\lim_{n \to \infty} W(f|_{\Gamma_n}) (\phi, \theta) = f(\phi,\theta)\]
holds pointwise for almost all $(\phi, \theta) \in [0,1] \times [0,\pi]$ except possibly 
a Lebesgue null set. 
However, the exceptional set will not contain the discrete space $\Gamma = \{ (\phi_j,\theta_k): j,k \geq 1 \}$, since for every $j,k \geq 1$, 
$W(f|_{\Gamma_n})(\phi_j, \theta_k) = f(\phi_j, \theta_k)$ 
eventually in $n$.\\

\noindent 
The arguments for $S^2$ extend readily to $S^{d-1}$ for all $d \geq 3$, which we briefly sketch. 
Let $f \in \mathscr{C}(S^{d-1})$ be a given continuous function and let
$$
\Gamma_n = \{ (\theta_{k_1},\ldots,\theta_{k_{d-1}}) : 
\theta_{k_1} = k_1/n, 1\leq k_j \leq n, 1\leq j \leq d-1\}
$$
parametrize a set of points in $S^{d-1}$ 
of cardinality $n^{d-1}$.
The Whitney extension of $f|_{\Gamma_n}$ 
can be chosen as a linear combination of spherical harmonics on $S^{d-1}$ given by  
$$
W(f|_{\Gamma_n})(\theta_1,\ldots,\theta_{d-1}) = \sum_{\ell=1}^n
a_{\ell,n} \left( e^{i\theta_1} \prod_{j=2}^{d-1} \sin \theta_j \right)^{\ell}.
$$
For $\chi_{\ell,n} (j) = e^{i \ell j /n}$,
\begin{align*}
    a_{\ell,n} \left( \prod_{j=2}^{d-1} \sin \theta_{k_j} \right)^{\ell}
    &= \langle W(f|_{\Gamma_n})(\cdot, \theta_{k_2},\ldots,\theta_{k_{d-1}}), \chi_{\ell,n} \rangle_{\mathscr{C}(\Z/n\Z)} \\
    &= \frac{1}{n}\sum_{j=1}^{n} W(f|_{\Gamma_n}) (j/n, \theta_{k_2},\ldots, \theta_{k_{d-1}}) e^{-i\ell j/n} \\
    &\to \int_{0}^1 f(\theta_1, \theta_{k_2},\ldots,\theta_{k_{d-1}}) e^{-i \ell \theta_1} d\theta_1
\end{align*}
as $n\to \infty$.
Reasoning as in the case of $S^2$, one deduces that 
$$
\lim_{n \to \infty}W(f|_{\Gamma_n}) (\theta_1,\ldots,\theta_{d-1}) = f (\theta_1,\ldots,\theta_{d-1})
$$
pointwise for all $(\theta_1,\ldots,\theta_{d-1})$ except perhaps on a null set in $[0,1]\times [0,\pi]^{d-2}$ not meeting
$\Gamma_n$. Moreover, $W(f|_{\Gamma_n})$ converges
to $f$ in $L^2(S^{d-1},\mu_d)$ with respect to the $\operatorname{SO}(d)$--invariant measure $\mu_d$ on 
the $(d-1)$--sphere in $\R^d$.

\section{Convergence on homogeneous spaces} \label{sec:coherent}
\begin{lemma} \label{pointwise}
Let $X$ be a homogeneous space and let
$\mathscr{H}(X)$ be a Hilbert space of functions on $X$
with a positive definite reproducing kernel 
\[
K: X \times X \to \mathbb{C}.
\] 
If a sequence $(f_k)$ of functions $f_k\in \mathscr{H}(X)$ is Cauchy in the norm topology,
then for every $z\in X$ the sequence 
$(f_k(z))$ is convergent 
as $k$ tends to infinity.
Moreover, if
$e_z:= K(\cdot,z) \in \mathscr{H}(X)$ have norms $||e_z||$
bounded locally in $z$, then 
$(f_k)$
converges locally uniformly in $z$ as $k$ tends to infinity. 
\end{lemma}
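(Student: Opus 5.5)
The plan is to use the reproducing property $f(z)=\langle f, e_z\rangle$ for all $f\in\mathscr{H}(X)$ and $z\in X$, where $e_z=K(\cdot,z)$, and combine it with the Cauchy--Schwarz inequality. For every $z\in X$ and all indices $k,m$ this gives
\[
|f_k(z)-f_m(z)| = |\langle f_k-f_m, e_z\rangle| \leq \|f_k-f_m\|\,\|e_z\| = \|f_k-f_m\|\,K(z,z)^{1/2},
\]
where $\|e_z\|^2=\langle e_z,e_z\rangle = K(z,z)$ is again the reproducing property. Positive definiteness guarantees $K(z,z)\geq 0$, and in fact $K(z,z)>0$, so this is a finite constant for each fixed $z$.

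For the first assertion, fix $z$. The display shows that $(f_k(z))_k$ is a Cauchy sequence in $\mathbb{C}$, so it converges by completeness of $\mathbb{C}$. I will also record that the limit is identified. By completeness of $\mathscr{H}(X)$ the sequence $f_k$ converges in norm to some $f\in\mathscr{H}(X)$. Applying the same estimate to $f_k-f$ gives $|f_k(z)-f(z)|\leq \|f_k-f\|\,\|e_z\|\to 0$, so the pointwise limit is exactly $f(z)$.

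For the second assertion, local boundedness means that every $z_0\in X$ has a neighborhood $U$ with $C_U:=\sup_{z\in U}\|e_z\|<\infty$. Taking the supremum over $z\in U$ in the estimate gives
\[
\sup_{z\in U}|f_k(z)-f(z)|\leq C_U\,\|f_k-f\|,
\]
which tends to $0$. So the convergence is uniform on $U$, hence locally uniform; if one prefers compact sets, cover a compact set by finitely many such neighborhoods.

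There is no real obstacle here. The only points to watch are that pointwise evaluation is really given by the inner product against $e_z$, which is the definition of a reproducing kernel Hilbert space, and that $\|e_z\|=K(z,z)^{1/2}$. The homogeneity of $X$ plays no role in this lemma.
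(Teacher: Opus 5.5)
Your proposal is correct and follows the paper's approach: the paper's proof only cites the reproducing property $f_k(z)=\langle f_k,e_z\rangle$. Your Cauchy--Schwarz estimate $|f_k(z)-f_m(z)|\le \|f_k-f_m\|\,\|e_z\|$, combined with the local bound on $\|e_z\|$, is exactly the step the paper leaves implicit, and it yields both the pointwise and the locally uniform conclusions.
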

\begin{proof}
For every $z\in X$ and every $k$,
the lemma follows from the reproducing property 
\[
f_k(z) = \langle f_k, e_z \rangle
\]
for all $z\in X$.
\end{proof}
For symmetric spaces $X=G/K$ 
where $G$ is Lie group of Hermitian type and $K$ is a maximal compact subgroup, 
the holomorphic discrete series of $G$ 
can be realized on reproducing kernel spaces of holomorphic functions, and the natural topology for their convergence
is the topology of locally uniform convergence. \\


Let $\mathscr{H}(X)$ be a Hilbert space of functions on a homogeneous space $X$. 
Let $\Gamma = \{\gamma_k : k \geq 1\}$ be a discrete subspace of $X$ and set $\Gamma_n = \{\gamma_k: 1\leq k \leq n\}$.
For a given function $f: \Gamma \to \mathbb{C}$,
we will say that a function $W(f|_{\Gamma_n})$ is 
a Whitney extension of $f|_{\Gamma_n}$ 
if $W(f|_{\Gamma_n}) \in \mathscr{H}(X)$
and $W(z) = f(z)$ for all $z\in \Gamma_n$.\\

Let $\mathscr{H}$ be a Hilbert space. 
A \emph{frame} is a sequence $(e_\gamma)_{\gamma \in \Gamma}$ in 
$\mathscr{H}$ for which there exist constants $\alpha, \beta >0$ such that  
\[
\alpha ||f||^2 \leq \sum_{\gamma \in \Gamma} | \langle f, e_\gamma \rangle |^2 \leq \beta ||f||^2 
\]
for all $f\in \mathscr{H}$.
Linear dependence between the vectors in a frame is allowed.
If $(e_z)_{z\in \Gamma}$ is a frame in a Hilbert space, then the linear span of $\{e_\gamma: \gamma \in \Gamma\}$
is a dense subspace of $\mathscr{H}$. \\

Frames form a class of examples
covered by Theorem \ref{conv} below. 
In \cite{Ortega-Cerda-Seip}, the authors give sufficient conditions
on $\Gamma \subseteq (-1,1)$ for the sequences $(e^{i \gamma x})_{\gamma \in \Gamma}$ to be frames in $L^2(-1,1)$. 
Note, however, that 
the frame property is not necessary in Theorem \ref{conv}. Indeed, 
the proof works as long as $\Gamma$ contains  
enough points to allow a dense linear span of $e_\gamma$ as $\gamma$ traverses $\Gamma$.

\begin{theorem}\label{conv}
Let $X$ be a homogeneous space with respect to a reductive Lie group $G$ and 
let $\mathscr{H}(X)$ be a Hilbert space of $\mathbb{C}$-valued functions on $X$
admitting a positive definite reproducing kernel 
\[
K: X\times X \to \mathbb{C}.
\]
Let $\Gamma$ be discrete topological subspace of $X$
with a given function $f:\Gamma \to \mathbb{C}$.
Fix an enumeration of $\Gamma$ and define  
$\Gamma_n = \{z_k \in \Gamma: 1\leq k\leq n\}$ for $n\geq 1$.
If the coherent states $(e_z)_{z \in \Gamma}$
span a dense subspace of $\mathscr{H}(X)$, then 
\[
W(f|_{\Gamma_n}) \xlongrightarrow{} f
\]
in the norm topology and pointwise as $n$ tends to infinity.
\end{theorem}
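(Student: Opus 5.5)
We make the Whitney extension canonical: $W(f|_{\Gamma_n})$ is the orthogonal projection $P_n f$ of $f$ onto the finite-dimensional subspace $V_n=\operatorname{span}\{e_z : z\in\Gamma_n\}$. We read the statement with $f\in\mathscr{H}(X)$, as in Theorem \ref{thmintro}; a bare function on $\Gamma$ need not come from $\mathscr{H}(X)$, and the statement presupposes it does. The reproducing property $\langle g, e_z\rangle = g(z)$ is the only tool needed.

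\emph{Step 1: interpolation.} We check that $P_n f$ is a Whitney extension. Since $f-P_nf\perp V_n$, for each $z\in\Gamma_n$ we get
\[
0=\langle f-P_nf, e_z\rangle = f(z)-(P_nf)(z).
\]
So $P_nf\in\mathscr{H}(X)$ agrees with $f$ on $\Gamma_n$.

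\emph{Step 2: existence and uniqueness in $V_n$.} Positive definiteness of $K$ makes the Gram matrix $(K(z_j,z_k))_{j,k\le n}$ invertible. Hence the interpolation problem has a unique solution $\sum_{k\le n}a_{k,n}e_{z_k}$ inside $V_n$, and by Step 1 this solution is $P_nf$. This shows that $W(f|_{\Gamma_n})$, chosen in the span of coherent states as in the introduction, really is the orthogonal projection. It also explains why the coefficients $a_{k,n}$ depend on $n$, which does not matter for this argument. For an arbitrary Whitney extension $W$ we could instead note that $P_n W = P_n f$, but norm convergence can only be claimed for the canonical choice.

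\emph{Step 3: norm convergence.} The subspaces satisfy $V_1\subseteq V_2\subseteq\cdots$ and their union is dense by hypothesis. Fix $\varepsilon>0$ and choose $g\in\bigcup_n V_n$ with $\|f-g\|<\varepsilon$, say $g\in V_N$. For $n\ge N$, best approximation gives
\[
\|f-P_nf\|\le\|f-g\|<\varepsilon.
\]
Hence $P_nf\to f$ in norm, and in particular the sequence is Cauchy.

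\emph{Step 4: pointwise convergence.} Lemma \ref{pointwise} gives convergence of $P_nf(z)$ for every $z$. Directly,
\[
|P_nf(z)-f(z)|=|\langle P_nf-f,e_z\rangle|\le\|P_nf-f\|\,K(z,z)^{1/2}\to0,
\]
and the convergence is locally uniform wherever $K(z,z)$ is locally bounded.

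The only real obstacle is Step 2, identifying the Whitney function with the projection. The homogeneity of $X$, the reductivity of $G$ and the discreteness of $\Gamma$ play no role beyond ensuring that $\Gamma$ is countable, so that an enumeration exists.
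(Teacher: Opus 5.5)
Your proposal is correct and follows the paper's proof. Like the paper, you identify $W(f|_{\Gamma_n})$ with the orthogonal projection $P_nf$ onto $\operatorname{span}\{e_z : z\in\Gamma_n\}$, check interpolation via the reproducing property, get norm convergence from density, and deduce pointwise convergence through $|\langle P_nf-f,e_z\rangle|\le\|P_nf-f\|\,K(z,z)^{1/2}$. Your best-approximation argument over the nested subspaces $V_1\subseteq V_2\subseteq\cdots$, and your remarks that $f$ must lie in $\mathscr{H}(X)$ and that $W$ must be this canonical choice, simply make explicit what the paper leaves implicit.
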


\begin{proof}
The positive definiteness of $K$ implies that the matrices $(K(z_j, z_k))_{j,k}$
are invertible or, equivalently, 
that the states $e_{z_k}$ are linearly independent 
for all $z_k\in X$.
Let \[
P_n: \mathscr{H}(X) \to \operatorname{span}_{\mathbb{C}}\{ e_z: z\in \Gamma_n \} \]
be the orthogonal projection onto the closed subspace
spanned by 
$\{e_z:  z\in \Gamma_n \}$. 
For every $z \in \Gamma_n$, the vector 
$f - P_n f$ is orthogonal  
to $e_{z}$ in $\mathscr{H}(X)$ and therefore
$\langle P_n f , e_{z} \rangle
 = \langle f , e_{z} \rangle$. We conclude that $W(f|_{\Gamma_n}) = P_n f$ solves the Whitney extension problem for $f|_{\Gamma_n}$ in $\mathscr{H}(X)$.
Since the linear span of $\{e_z: z\in \Gamma\}$ is dense in $\mathscr{H}(X)$,  
it follows that  
\[
W(f|_{\Gamma_n}) = P_n f \xlongrightarrow{} f
\]
in the norm topology 
as $n\to \infty$. By Lemma \ref{pointwise}, we conclude that
$W(f|_{\Gamma_n})(z)$ converges to $f(z)$ at every $z\in X$ as $n$ tends to infinity.
\end{proof}

\bibliographystyle{amsplain} 
\bibliography{references.bib}

\nocite{*}

\end{document}